\definecolor{darkred}{RGB}{200, 60, 0}
\definecolor{mildblue}{RGB}{0, 100, 250}
\newcommand{\br}{\mathbb{R}}
\newcommand{\bz}{\mathbb Z}
\newcommand{\bn}{\mathbb N}
\newcommand{\vp}{\varphi}
\newcommand{\ssm}{\smallsetminus}
\DeclareMathOperator{\Homeo}{Homeo}
\DeclareMathOperator{\Diff}{Diff}
\providecommand{\co}{\colon\thinspace}
\newtheorem{Thm}{Theorem}[section]
\newtheorem{Thm*}{Theorem}
\newtheorem{Prop}[Thm]{Proposition}
\newtheorem{Lem}[Thm]{Lemma}
\newtheorem{Cor}[Thm]{Corollary}
\newtheorem{MainThm}{Theorem}
\newtheorem{Cor*}[MainThm]{Corollary}
\theoremstyle{definition}
\newtheorem{Def}[Thm]{Definition}
\newtheorem*{Ex*}{Examples}
\newtheorem*{Problem}{Problem}
\numberwithin{equation}{section}
\title[Homeomorphisms of Euclidean space are commutators]{Orientation-preserving homeomorphisms of\\Euclidean space are commutators}
\author{Megha Bhat}
\address{Department of Mathematics \\ CUNY Graduate Center \\ New York, NY 10016}
\email{mbhat@gradcenter.cuny.edu}
\author{Nicholas G. Vlamis}
\address{Department of Mathematics \\ CUNY Graduate Center \\ New York, NY 10016, and \newline Department of Mathematics \\ CUNY Queens College \\ Flushing, NY 11367}
\email{nvlamis@gc.cuny.edu}
\begin{document}  

\begin{abstract}
We prove that every orientation-preserving homeomorphism of Euclidean space can be expressed as a commutator of two orientation-preserving homeomorphisms. 
We give an analogous result for annuli.
In the annulus case, we also extend the result to the smooth category in the dimensions for which the associated sphere has a unique smooth structure. 
As a corollary, we establish that every orientation-preserving diffeomorphism of the real line is the commutator of two orientation-preserving diffeomorphisms. 
\end{abstract}

\maketitle



\section{Introduction}
In 1951, Ore \cite{OreSome} initiated the investigation of groups in which every element can be expressed as a commutator.
In particular, he proved that this holds for finite alternating groups and the symmetric group on \( \bn \).
Much more recently, for each \( n \in \bn \), Tsuboi \cite{TsuboiHomeomorphism} showed that the group of orientation-preserving homeomorphisms of the \( n \)-sphere \( \mathbb S^n \) has this property.
At roughly the same time, Basmajian--Maskit \cite{BasmajianSpace} proved that every orientation-preserving isometry of \( \mathbb S^{n-1} \), \( \br^n \), and \( \mathbb H^n \) can be expressed as a commutator for \( n \geq 3 \), where \( \mathbb H^n \) is hyperbolic \( n \)-space.

More than 50 years earlier, Anderson \cite{AndersonAlgebraic} showed that each element of \( \Homeo^+(\mathbb S^2) \) and \( \Homeo^+(\mathbb S^3) \) can be expressed as the product of two commutators. 
Using the generalized Sch\"onflies theorem and the annulus theorem (see the preliminaries below), the techniques used by Anderson in dimension two can be extended to show that, for all \( n \in \bn \), every element of \( \Homeo^+(\mathbb S^n) \) can be expressed as a product of two commutators. 
The same result holds for \( \Homeo^+(\br^n) \) and \( \Homeo_0(\mathbb S^{n-1} \times \br) \) (a proof for \( n = 2 \), using ideas of Le Roux--Mann \cite{LeRouxStrong}, can be found in \cite{VlamisHomeomorphism}; the same proof technique can be used to establish the result in the higher dimensional cases as well). 
Above, \( \Homeo_0(M) \) denotes the connected component of the identity of \( \Homeo(M) \) equipped with the compact-open topology.
We note that \( \Homeo_0(\mathbb S^n) \) coincides with \( \Homeo^+(\mathbb S^n) \),  \( \Homeo_0(\br^n) \)  with  \( \Homeo^+(\br^n) \), and \( \Homeo_0(\mathbb S^n \times \br) \) with the subgroup of \( \Homeo^+(\mathbb S^n \times \br) \) stabilizing each end of \( \mathbb S^n \times \br \) (see the preliminaries below). 

Given this history and  the work of Tsuboi and Basmajian--Maskit, it is natural to ask if every element of \( \Homeo_0(\br^n) \) and \( \Homeo_0(\mathbb S^n \times \br) \) can be expressed as a commutator: our theorem answers this in the affirmative.

\begin{MainThm}
For each \( n \in \bn \), every element of \( \Homeo_0(\mathbb R^n) \) and \( \Homeo_0(\mathbb S^n \times \br) \) can be expressed as a single commutator. 
\end{MainThm}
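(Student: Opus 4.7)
My approach rewrites the identity $f = [g,h] = ghg^{-1}h^{-1}$ as the conjugacy equation $ghg^{-1} = fh$: it suffices to produce, for each $f \in \Homeo_0(M)$ (with $M = \br^n$ or $M = \bS^n \times \br$), an auxiliary \emph{shift} $h \in \Homeo_0(M)$ such that $fh$ is conjugate to $h$ in $\Homeo_0(M)$. I call $\tau \in \Homeo_0(M)$ a \emph{shift} if it acts freely and properly discontinuously with quotient homeomorphic to the expected model --- namely $\br^{n-1} \times \bS^1$ for $M = \br^n$ (using $\br^n \approx \br^{n-1}\times\br$) and $\bS^n \times \bS^1$ for $M = \bS^n\times\br$ --- or equivalently, admits a locally flat, properly embedded codimension-one cross-section $\Sigma$ with $\tau(\Sigma)\cap\Sigma = \emptyset$ whose iterates $\{\tau^i(\Sigma)\}_{i \in \bz}$ exhaust $M$. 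The prototype is a coordinate translation along the $\br$-factor.

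\emph{Step 1 (adapting $h$ to $f$).} Start with $h$ a coordinate translation along the $\br$-factor, rescaled by a vertical push chosen large enough that $fh$ is fixed-point-free and moves every point in the same end-direction as $h$. This is feasible because $f\in\Homeo_0(M)$ preserves each end of $M$: making $h$ translate by a sufficiently large amount (chosen depending on $f$'s displacement near each end) forces $fh$ to have strictly positive $\br$-displacement everywhere, so in particular $fh$ has no fixed points and the orbits of $fh$ escape to the same end as those of $h$.

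\emph{Step 2 (uniqueness of shifts up to conjugacy).} The crux is the lemma that any two shifts in $\Homeo_0(M)$ are conjugate in $\Homeo_0(M)$. Given shifts $\tau, \tau'$ with cross-sections $\Sigma, \Sigma'$, the generalized Sch\"onflies theorem produces an ambient homeomorphism of $M$ taking $\Sigma$ to $\Sigma'$; the annulus theorem identifies the closed fundamental domain between $\Sigma$ and $\tau(\Sigma)$ with a standard product annulus $\Sigma\times[0,1]$, and similarly for $\tau'$, so a match on the boundary extends over the fundamental domain; propagating equivariantly via powers of $\tau$ and $\tau'$ assembles a global conjugator, and an orientation-and-end check places it inside $\Homeo_0(M)$.

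\emph{Step 3 (concluding and the main obstacle).} To finish, verify that $fh$ is a shift: taking a cross-section $\Sigma$ of $h$, slightly adjust $\Sigma$ via the generalized Sch\"onflies theorem so that $fh(\Sigma)\cap\Sigma = \emptyset$ (possible since $fh$ is fixed-point-free with controlled positive displacement) while retaining that $\{(fh)^i(\Sigma)\}$ exhausts $M$. Apply Step 2 to produce $g \in \Homeo_0(M)$ with $ghg^{-1} = fh$, whence $f = [g,h]$. The main obstacle will be the uniqueness-of-shifts lemma in Step 2: while the annulus theorem and generalized Sch\"onflies handle matching a single fundamental domain topologically, assembling an equivariant conjugator landing inside $\Homeo_0(M)$ requires delicate control, especially since the shifts arising as $fh$ are typically far from translational and one must verify the cross-section hypothesis for these perturbed shifts rather than only for coordinate translations.
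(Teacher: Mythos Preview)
Your high–level strategy coincides with the paper's: rewrite \(f=[g,h]\) as a conjugacy between \(h\) and \(fh\), and prove a ``all shifts are conjugate'' lemma (the paper calls these \emph{topologically loxodromic} homeomorphisms, and your Step~2 is exactly their Proposition~2.9/Corollary~2.10). For \(M=\mathbb S^n\times\br\) your plan is essentially the paper's, though Step~1 as written is imprecise: a \emph{uniform} translation \(h_c\colon(x,t)\mapsto(x,t+c)\) does \emph{not} suffice, since \(f\) can have arbitrarily large negative \(\br\)-displacement on the noncompact manifold, so no single \(c\) makes \(fh_c\) fixed-point-free. The paper avoids this by building \(h\) inductively, choosing a sequence of nested annuli \(A_k\) large enough that each \(\Sigma_k\cup f(\Sigma_k)\subset\mathrm{int}\,A_k\), and then further correcting \(h\) by compactly supported homeomorphisms inside each \(A_{k+1}\) to force \((hf)(\Sigma_k)=\Sigma_{k+1}\) on the nose. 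Your ``rescaled by a vertical push'' can be repaired along these lines (take \(h(x,t)=(x,\phi(t))\) with \(\phi\) growing fast enough, using compactness of \(\mathbb S^n\)), but this is the missing content of your Step~1.

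For \(M=\br^n\) there is a genuine gap. Your shifts are translation-like with \emph{noncompact} cross-section \(\br^{n-1}\), and here Step~1 fails outright: for a rotation \(f\) of \(\br^2\), say \(f(x,y)=(-y,x)\), no map of the form \(h(x,y)=(x,\phi(y))\) makes \(fh\) have positive \(y\)-displacement, since \(\pi_2(fh(x,y))=x\) is unrelated to \(y\). More broadly, because the cross-section is noncompact, you cannot bound \(f\)'s displacement along it, so no ``large enough'' push exists. Step~2 also leaves the Sch\"onflies/annulus toolkit, which is about embedded spheres, not properly embedded hyperplanes. The paper circumvents all of this by working instead with \emph{dilation-type} loxodromic maps (compact spherical cross-sections \(\mathbb S^{n-1}\)), viewing \(\br^n\) as \(\mathbb S^n\smallsetminus\{z\}\): it runs the annulus construction on a nested sequence of balls shrinking to the end \(z\), and then---since \(\br^n\) has only one end---it must separately manufacture the second fixed point of \(g\circ f\) by a recursive modification of \(g\) inside a ball around a point moved by \(f\) (this is the adaptation of Tsuboi's spherical argument). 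That second phase is the idea your proposal is missing for \(\br^n\).
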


The proof of our theorem uses the same philosophy as Tsuboi's argument in the spherical case: 
A group element \( f \) can be expressed as a commutator if and only if there exists a group element \( g \) such that \( gf \) and \( g \) are conjugate. 
Tsuboi's idea is to start with an orientation-preserving homeomorphism \( f \) of a sphere and construct a homeomorphism \( g \) having strong enough hyperbolic dynamics with respect to \( f \) so that \( g\circ f \) exhibits the same dynamics as \( g \).
He then uses this dynamical picture to guarantee \( g\circ f \) and \( g \) are conjugate.

The discussion above is a particular instance of a more general phenomenon.
A \emph{word} is an element in a finite-rank free group.
Given \( r \in \bn \) and a word \( {w \in \bz x_1 * \bz x_2 *\cdots*\bz x_r} \), we write \( w = w(x_1, x_2, \ldots, x_r) \) and view \( w \) as an expression in the variables \( x_1, \ldots, x_r \).
Then, given a group \( G \), we have a substitution map \( w \co G^r \to G \) given by \( (g_1, \ldots, g_r) \) maps to \( w(g_1, \ldots, g_r) \) in \( G \). 
Let \( w(G) \) be the subgroup of \( G \) generated by the set \( {G_w = \{ w(\mathbf g), w(\mathbf g)^{-1} : \mathbf g \in G^r \}} \).
Then, \( w(G) \) is a normal subgroup of \( G \) (in fact, it is characteristic).
The \emph{\( w \)-width} of \( G \) is the smallest natural number \( p \) such that every element of \( w(G) \) can be expressed as a product of \( p \) elements in \( G_w \); if such a number does not exist then the \( w \)-width is said to be infinite (see \cite{SegalWords} for more details). 
For example, if \( w = xyx^{-1}y^{-1} \in \bz x * \bz y \), then the \( w \)-width of a group is known as its \emph{commutator width}. 
In this language, Tsuboi's theorem implies that \( \Homeo_0(\mathbb S^n) \) has commutator width one, and our theorem implies   \( \Homeo_0(\br^n) \) and \( \Homeo_0(\mathbb S^n \times \br ) \) also have commutator width one. 

A group \( G \) is \emph{uniformly simple} if there exists \( k \in \bn \) such that for all \( g, f \in G \) with \( g \) nontrivial, \( f \) can be expressed as the product of at most \( k \) conjugates of \( g \) and \( g^{-1} \).
Anderson \cite{AndersonAlgebraic} showed that \( \Homeo_0(\mathbb S^2) \) is uniformly simple, and his argument can be extended to other dimensions using the generalized Sch\"onflies theorem and the annulus theorem; in fact, he showed that \( k \) can be chosen to be eight.
Therefore, given any nontrivial word \( w \), \( w(\Homeo_0(\mathbb S^n)) = \Homeo_0(\mathbb S^n) \) and the \( w \)-width of \( \Homeo_0(\mathbb S^n) \) is at most eight. 
In light of Tsuboi's result, it is natural to ask for which words \( \Homeo_0(\mathbb S^n) \) has width one. 
This can also be asked for \( \Homeo_0(\br^n) \) and \( \Homeo_0(\mathbb S^n \times \br) \); but, unlike \( \Homeo(\mathbb S^n) \), neither \( \Homeo_0(\br^n) \) nor \( \Homeo_0(\mathbb S^n \times \br) \) are simple, as the subgroup of compactly supported homeomorphisms is a nontrivial proper normal subgroup.
However, the germ at the end of \( \br^n \) (resp., at an end of \( \mathbb S^n \times \br \)) is uniformly simple; this first appears in the (unpublished) thesis of Ling \cite{LingAlgebraic} (see also \cite{MannAutomatic} and \cite{SchweitzerNormal}).

\begin{Problem}
Characterize the words for which the word width of \( \Homeo_0(\mathbb S^n) \) (resp., \( \Homeo_0(\br^n) \), \( \Homeo_0(\mathbb S^n \times \br) \)) is equal to one. 
\end{Problem}

As a corollary to our theorem, we show that the \( w \)-width is one for a particular class of words.

\begin{Cor*}
\label{cor:2}
Let \( n, r \in \bn \) with \( r > 1 \), and let \( G \) denote any one of \( \Homeo_0(\mathbb S^n) \), \( \Homeo_0(\br^n) \), or \( \Homeo_0(\mathbb S^n \times \br) \).
If \( p_1, p_2, \ldots, p_r \in \bz \ssm \{0\} \) and \( g \in G \), then there exists \( g_1, g_2, \ldots, g_r \in G \ssm \{1\} \) such that \( g = \prod_{i=1}^r g_i^{\circ p_i} \). 
\end{Cor*}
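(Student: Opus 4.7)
The plan is to deduce the corollary from the main theorem via a divisibility property of $G$, combined with a short induction on $r$.

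First I would prove a \emph{divisibility lemma}: for every $g \in G$ and every nonzero $p \in \bz$, there exists $h \in G$ with $h^{\circ p} = g$, and whenever $g \neq 1$ the element $h$ is automatically nontrivial. In each of the three groups at hand there additionally exist nontrivial $p$-torsion elements for every $p \geq 2$, via finite-order rotations in ambient dimensions at least two; the remaining case $\Homeo^+(\br)$ is torsion-free but divisibility is elementary there, by an interval-by-interval construction on the complement of the fixed set of $g$.

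Given divisibility, the corollary follows by induction on $r$. For the base case $r = 2$, choose $g_1 \in G \setminus \{1\}$ with $g_1^{\circ p_1} \neq g$; this is possible because $G$ is uncountable while the set of $p_1$-th roots of $g$ is small. Setting $g' := g_1^{-p_1} g \neq 1$, the divisibility lemma furnishes a (necessarily nontrivial) $g_2 \in G$ with $g_2^{\circ p_2} = g'$, so that $g = g_1^{\circ p_1} \, g_2^{\circ p_2}$. For the inductive step $r \geq 3$, pick any $g_1 \in G \setminus \{1\}$ and apply the inductive hypothesis to $g_1^{-p_1} g$ with exponents $p_2, \ldots, p_r$ to obtain nontrivial $g_2, \ldots, g_r$ satisfying $g_1^{-p_1} g = \prod_{i=2}^r g_i^{\circ p_i}$, which concatenates to the desired factorization.

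The main obstacle is the divisibility lemma itself: specifically, producing a single $p$-th root of an arbitrary element of $\Homeo_0(\br^n)$, $\Homeo_0(\mathbb S^n)$, or $\Homeo_0(\mathbb S^n \times \br)$, as opposed to merely a product of $p$-th powers (which is straightforward by fragmentation). A natural strategy is to take $p$ disjoint conjugate copies of a supporting ball, cyclically permute them via an auxiliary homeomorphism so that the $p$-fold iterate realizes the prescribed homeomorphism on one copy, and stitch together such local constructions across a fragmentation of $g$; arranging the stitching so that the end result is a bona fide single $p$-th power is the technical heart of the argument.
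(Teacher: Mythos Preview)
Your reduction to a divisibility lemma is logically sound once that lemma is in hand, but the lemma is the entire difficulty, and the sketch you give for it does not work as written. Fragmenting $g$ into homeomorphisms supported in balls and building a $p$-th root of each fragment via a cyclic permutation of $p$ disjoint conjugate copies does not produce a $p$-th root of $g$: the fragments do not commute, so $(\prod_i h_i)^{\circ p} \neq \prod_i h_i^{\circ p}$ in general, and moreover the $p$-th power of your cyclic-permutation map acts nontrivially on \emph{all} $p$ copies, not just the original one. The ``stitching'' you defer is therefore not a technicality but the whole problem, and nothing here indicates how to carry it out. Whether every element of $\Homeo_0(\br^n)$ or $\Homeo_0(\mathbb S^n)$ admits a $p$-th root for $n \geq 2$ is not, as far as I can see, elementary.

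The paper avoids this entirely by extracting more from the proofs of the main theorems than the bare commutator statement. Those proofs actually show that any element of $G$ factors as a product of two topologically loxodromic homeomorphisms (namely, in the notation of Theorems~\ref{thm:annulus} and~\ref{thm:rn}, $f = g^{-1}\circ(g\circ f)$ with both factors loxodromic). Iterating this splitting writes the given element as $\prod_{i=1}^r f_i$ with every $f_i$ loxodromic. Now the key point: powers of loxodromic maps are loxodromic, and all loxodromic elements of $G$ (sharing the relevant sink or source) are conjugate by Proposition~\ref{prop:conjugate} and Corollary~\ref{cor:conjugate}. Hence each $f_i$ is conjugate to $f_i^{\circ p_i}$, so $f_i = h_i\, f_i^{\circ p_i}\, h_i^{-1} = (h_i f_i h_i^{-1})^{\circ p_i}$, and setting $g_i = h_i f_i h_i^{-1}$ gives the desired factorization. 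Only \emph{loxodromic} elements need to be $p$-th powers, and for those the root is supplied for free by the conjugacy classification---no general divisibility required.
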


The proof of \Cref{cor:2} will be given at the end of the note.

\subsection*{A remark on diffeomorphisms}
Here, we provide an accounting of the extent to which our methods extend to the smooth category. 
For a smooth manifold \( M \), let \( \Diff(M) \) denote the group of diffeomorphisms \( M \to M \), and let \( \Diff_0(M) \) denote the connected component of the identity in \( \Diff(M) \) equipped with the compact-open \( C^\infty \)-topology.
Thurston \cite{ThurstonFoliations} proved that  if \( M \) is closed then \( \Diff_0(M) \) is perfect (see \cite{MannShort} for a short proof). 
Burago--Ivanov--Polterovich \cite{BuragoConjugation} showed that  \( \Diff_0(\mathbb S^n) \) is uniformly perfect with commutator width bounded above by four; Rybicki \cite{RybickiBoundedness} proved the same for a class of open manifolds, including \( \Diff_0(\br^n) \) and \( \Diff_0(\mathbb S^n \times \br) \).
Given this history and the fact that \( \Homeo_0( \mathbb S^n ) \), \( \Homeo_0(\br^n) \), and \( \Homeo_0(\mathbb S^n \times \br) \) all have commutator width one, it is natural to ask if the same holds for diffeomorphisms. 

In the case of annuli, we can give a positive answer to this question in specific dimensions, namely in the dimensions \( n \) for which the \( n \)-sphere is known to have a unique smooth structure (with the exception of \( n = 5 \)). 
In these dimensions, as we will now explain, the  preliminary results---as pertain to annuli---presented in Section~\ref{sec:prelim} can be extended to the smooth category.
Moreover, for annuli in these dimensions,  the arguments throughout the note go through verbatim.

Let \( \mathbb B^n \) be the closed unit ball in \( \br^n \), and let \( \Diff_\partial(\mathbb  B^n) \) be the group of diffeomorphisms \( \mathbb B^n \to \mathbb B^n \) fixing a neighborhood of \( \partial \mathbb B^n \) pointwise. 
A point of weakness in extending our results to the smooth setting is \Cref{thm:alexander}, which does not extend in all dimensions.  
However, the proof of \Cref{thm:alexander} given below is valid  in the smooth category for  the dimensions \( n \) in which \( \Diff_\partial(\mathbb B^{n})  \) is connected.
When \( n \in \{1, 2, 3 \} \), it is known that \( \Diff_\partial(\mathbb B^n) \) is connected, and when \( n \geq 5 \), the number of components of \( \Diff_\partial(\mathbb B^{n}) \) is equal to the number of exotic spheres in dimension \( n \), which is encoded by the cardinality of the group \( \Theta_{n+1} \) of h-cobordism classes of homotopy \( (n+1) \)-spheres; we refer the reader to the historical remarks section of \cite{KupersSome} for a more detailed discussion and for references. 

The other results from Section~\ref{sec:prelim} can be adapted using the h-cobordism theorem \cite{SmaleStructure}, which puts an additional dimension restriction, as it is not known whether the h-cobordism theorem holds in dimension three (it fails in general in dimension four, but this turns out not to be relevant to this note). 
We refer the reader to Milnor's notes for details on the h-cobordism theorem and its applications, specifically \cite[\S9 and Concluding~Remarks]{MilnorLectures}.
These are the only dimensional obstructions that arise in extending our arguments for annuli to the smooth setting. 
Accounting for these restrictions allows us to record the following theorem. 

\begin{MainThm}
Let \( n \in \bn \ssm \{4,5\} \).
If \( \Theta_n \) is trivial, then \( \Diff_0( \mathbb S^{n-1} \times \br ) \) has commutator width one. 
\qed
\end{MainThm}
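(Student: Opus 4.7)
The plan is to replay, in the smooth category, the argument that established commutator width one for $\Homeo_0(\mathbb S^{n-1} \times \br)$, using the hypotheses on $n$ and $\Theta_n$ to promote each topological ingredient to a smooth one. Recall the overarching Tsuboi-style strategy: given $f \in \Diff_0(\mathbb S^{n-1} \times \br)$, one constructs a diffeomorphism $g$ whose source--sink dynamics along the two ends of the annulus are so strongly contracting that $g \circ f$ exhibits the same asymptotic dynamics as $g$. From this matched dynamical picture one then builds a conjugating diffeomorphism $h$ satisfying $h(g\circ f)h^{-1} = g$, which immediately exhibits $f$ as a single commutator.

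What remains is to verify that the elementary building blocks used in the topological proof have smooth analogues under our hypotheses. Two obstructions arise, both flagged in the remark on diffeomorphisms. First, I would need the smooth version of the Alexander-type extension result (\Cref{thm:alexander}) to canonically connect a boundary-identity diffeomorphism of a ball to the identity; this amounts to the connectedness of the relevant ball-diffeomorphism group, which in the range under consideration is encoded by the triviality of $\Theta_n$. Second, the preliminary Sch\"onflies- and annulus-style results of Section~\ref{sec:prelim} must be available smoothly; their proofs pass through the h-cobordism theorem, which applies only when the resulting cobordism has dimension at least six --- whence the exclusion of $n \in \{4,5\}$, with $n \leq 3$ covered by classical low-dimensional results.

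Granted these smooth analogues, I would walk through the proof of the first main theorem with $\Homeo$ replaced by $\Diff$ throughout, checking at each step --- the fragmentation, the construction of $g$ with the prescribed end dynamics, and the dynamical definition of $h$ --- that smoothness is preserved. The hardest part will be the smooth Alexander trick: without canonical smooth shrinking, the definition of $h$ across nested annular regions would fail to be smooth at the seams, and the inductive dynamical packaging of the conjugacy between $g$ and $g\circ f$ would collapse. Once canonical smooth shrinking is available under the triviality of $\Theta_n$, every other step transfers from the topological setting essentially without modification, yielding the desired commutator expression for $f$.
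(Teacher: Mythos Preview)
Your proposal is correct and mirrors the paper's approach exactly: the paper records this theorem with no separate proof, having explained in the remark on diffeomorphisms that the preliminaries of Section~\ref{sec:prelim} admit smooth analogues under the stated hypotheses (the triviality of \( \Theta_n \) for the smooth version of \Cref{thm:alexander}, and the h-cobordism theorem together with low-dimensional results for the rest), after which the annulus argument goes through verbatim. One small refinement: the exclusion of \( n=5 \) is not purely an h-cobordism issue but also reflects that the identification \( \pi_0\big(\Diff_\partial(\mathbb B^{n-1})\big) \cong \Theta_n \) is only available for \( n-1 \geq 5 \), so even though \( \Theta_5 \) is trivial one cannot conclude that \( \Diff_\partial(\mathbb B^4) \) is connected.
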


Appealing to \cite[Corollary~1.15]{WangTriviality} for the known values of \( n \in \{ 1, \ldots, 61\} \) in which \( |\Theta_n| = 1 \), we have:

\begin{Cor*}
If \( n \in \{ 1, 2, 3, 6, 12, 56, 61  \} \), then \( \Diff_0( \mathbb S^{n-1} \times \br ) \) has commutator width one.
\qed
\end{Cor*}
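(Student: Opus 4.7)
The plan is simply to apply the immediately preceding theorem for each of the seven listed dimensions, since this corollary is a direct combination of that theorem with a known computation of the groups \( \Theta_n \). The preceding theorem asserts that \( \Diff_0(\mathbb S^{n-1} \times \br) \) has commutator width one whenever two hypotheses are met: (a) \( n \in \bn \ssm \{4,5\} \), and (b) \( \Theta_n \) is trivial. So the task reduces to verifying these two hypotheses on the set \( S = \{1, 2, 3, 6, 12, 56, 61\} \).

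The first hypothesis is immediate by inspection: none of the elements of \( S \) equals \( 4 \) or \( 5 \). For the second hypothesis, I would invoke the cited result \cite[Corollary~1.15]{WangTriviality}, which enumerates exactly those \( n \in \{1, \ldots, 61\} \) for which \( |\Theta_n| = 1 \); this enumeration is precisely the set \( S \). Applying the preceding theorem to each \( n \in S \) individually then yields the corollary.

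There is no genuine obstacle here — the mathematical work is contained entirely in the preceding theorem (which packages the commutator-width-one argument together with its h-cobordism-driven dimension restrictions) and in the deep computation of the Kervaire--Milnor groups \( \Theta_n \) completed through dimension \( 61 \) by Wang--Xu. The only subtlety worth flagging is that the dimensional exceptions \( n = 4, 5 \) in the hypothesis of the preceding theorem happen not to interfere with the list from \cite{WangTriviality}: although \( \Theta_5 \) is known to be trivial, the value \( n = 5 \) is ruled out by the h-cobordism-based preliminaries rather than by any exotic-sphere obstruction, so it is legitimately absent from the conclusion. With this observation recorded, the proof reduces to a single sentence citing the theorem and \cite{WangTriviality}.
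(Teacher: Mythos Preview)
Your proposal is correct and matches the paper's approach exactly: the corollary is immediate from the preceding theorem together with the Wang--Xu computation of \( \Theta_n \). One small slip: the Wang--Xu enumeration of \( n \leq 61 \) with \( |\Theta_n| = 1 \) is not literally equal to \( S \) (as you yourself note, \( \Theta_5 \) is trivial, and \( \Theta_4 \) as well), so \( S \) is rather the intersection of that enumeration with \( \bn \ssm \{4,5\} \); your final paragraph shows you understand this, so just correct the earlier sentence accordingly.
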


In the above corollary, setting \( n = 1 \), we obtain that \( \Diff_0(\mathbb S^0 \times \br) \cong \Diff_0(\br) \times \Diff_0(\br) \) has commutator width one, yielding the following corollary.

\begin{Cor*}
\( \Diff_0(\br) \) has commutator width one. \qed
\end{Cor*}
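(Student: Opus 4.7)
The plan is to deduce this directly from the preceding corollary applied with $n = 1$. First, I would observe that $\mathbb S^0$ consists of two points, so $\mathbb S^0 \times \br$ is the disjoint union of two copies of the real line. Any diffeomorphism in the identity component of $\Diff(\mathbb S^0 \times \br)$ must preserve each connected component of the manifold (since the swap is not connected to the identity through diffeomorphisms), and its restriction to each copy of $\br$ lies in $\Diff_0(\br)$. Restricting to each copy therefore gives the isomorphism
\[
\Diff_0(\mathbb S^0 \times \br) \;\cong\; \Diff_0(\br) \times \Diff_0(\br)
\]
already asserted in the excerpt.

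Next, given an arbitrary $g \in \Diff_0(\br)$, I would consider the element $(g, \mathrm{id}) \in \Diff_0(\br) \times \Diff_0(\br)$. By the preceding corollary (with $n = 1$), this element is a single commutator: there exist $(a_1, a_2), (b_1, b_2) \in \Diff_0(\br) \times \Diff_0(\br)$ with $(g, \mathrm{id}) = [(a_1, a_2), (b_1, b_2)]$. Commutators in a direct product are computed coordinatewise, so
\[
(g, \mathrm{id}) \;=\; \bigl([a_1, b_1],\, [a_2, b_2]\bigr),
\]
and in particular $g = [a_1, b_1]$ expresses $g$ as a single commutator in $\Diff_0(\br)$. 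Since $g$ was arbitrary, this gives commutator width one.

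There is no genuine obstacle: the corollary is packaged as a trivial algebraic consequence of the preceding statement, with all of the geometric and smooth-category content already absorbed into the proof of the $n = 1$ case of the previous result. The only point requiring care is the identification of $\Diff_0(\mathbb S^0 \times \br)$ with the direct product $\Diff_0(\br) \times \Diff_0(\br)$, which rests on the elementary fact that elements of the identity component preserve each connected component of the underlying manifold.
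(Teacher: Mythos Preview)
Your proposal is correct and follows essentially the same route as the paper: the paper simply notes that setting \( n = 1 \) in the preceding corollary gives that \( \Diff_0(\mathbb S^0 \times \br) \cong \Diff_0(\br) \times \Diff_0(\br) \) has commutator width one, and records the present corollary with a \qed. Your argument supplies the (elementary) algebraic step the paper leaves implicit, namely that commutator width one for a direct product passes to each factor via coordinatewise computation of commutators.
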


Our proofs as written do not immediately extend to the case of \( \br^n \), and tracing through the arguments, one finds that the issue can be reduced to a question of differentiability at a single point in the proofs of \Cref{cor:conjugate} and \Cref{thm:rn}. 
For annuli, these issues are pushed off to infinity, allowing us to extend to the smooth category.

\subsection*{Acknowledgements}
The authors thank the anonymous referee for their comments and for suggesting we add a discussion of diffeomorphisms, which did not exist in the original draft.
The second author is supported by NSF DMS-2212922 and PSC-CUNY Awards \#65331-00 53 and \#66435-00 54

\section{Preliminaries}
\label{sec:prelim}

Before we begin, we will need the generalized Sch\"onflies theorem, the annulus theorem, and the characterizations of \( \Homeo_0(\mathbb S^n)\), \( \Homeo_0(\mathbb R^n) \), and \( \Homeo_0(\mathbb S^n \times \br) \)  given in the introduction.
An \( (n-1) \)-dimensional submanifold \( N \) of an \( n \)-manifold \( M \) is \emph{locally flat} if each point of \( N \) has an open neighborhood \( U \) in \( M \) such that the pair \( (U, U \cap N) \) is homeomorphic to \( (\br^n, \br^{n-1}) \).
Additionally, if  \( X \subset M \) is a closed subset with nonempty interior, then we say \( X \) is \emph{locally flat} if \( \partial X \) is a locally flat \( (n-1) \)-dimensional submanifold of \( M \).
In an \( n \)-manifold \( M \), we use the terminology \emph{locally flat annulus} to refer to a locally flat closed subset of \( M \) that is homeomorphic to \( \mathbb S^{n-1} \times [0,1] \).

\begin{Thm}[Generalized Sch\"onflies Theorem {\cite{Brown1,Brown2}}]
If \( \Sigma \) is a locally flat {\( (n-1) \)-dimensional} sphere in \( \mathbb S^n \), then the closure of each component of \( \mathbb S^n \ssm \Sigma \) is homeomorphic to the closed \( n \)-ball. 
\qed
\end{Thm}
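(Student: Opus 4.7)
The natural approach is to follow Brown's original 1960 argument, which reduces the theorem to a bicollaring construction together with an application of a cellular decomposition principle.

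First, I would use the local flatness of $\Si$ to construct a global \emph{bicollar}: an embedding $\phi \co \Si \times [-1,1] \to \mathbb S^n$ with $\phi(x,0) = x$ for every $x \in \Si$. Local flatness supplies local product neighborhoods of $\Si$, and these can be patched into a single product neighborhood using a partition of unity together with Brown's collaring theorem. Denote by $A$ and $B$ the closures of the two components of $\mathbb S^n \ssm \Si$, labeled so that $\phi(\Si \times [0,1]) \subset A$ and $\phi(\Si \times [-1,0]) \subset B$. It then suffices to prove $A \cong \mathbb B^n$; the argument for $B$ is symmetric.

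The second step is to produce a continuous surjection $q \co \mathbb S^n \to \mathbb S^n$ that crushes $B$ to a single point while restricting to a homeomorphism on the interior of $A$. The construction uses $\phi$ to define a sequence of self-homeomorphisms of $\mathbb S^n$, each stretching a slice $\phi(\Si \times \{1/2^k\})$ outward across $A$ while compressing $B$ into progressively smaller topological balls; one then realizes $q$ as the uniform limit. The image $q(A)$ is a topological $n$-ball (a closed hemisphere of $\mathbb S^n$ in the natural model), and the unique non-degenerate fiber $q^{-1}(q(B))$ is \emph{cellular}, namely the nested intersection of the bicollared closed balls $\mathbb S^n \ssm \phi(\Si \times (1/2^k, 1])$. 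Invoking Moore's decomposition theorem---upper semicontinuous decompositions of $\mathbb S^n$ into points and cellular sets yield quotient $\mathbb S^n$---one concludes that the quotient map factoring through $q$ is a homeomorphism, so $A$ is homeomorphic to a closed $n$-ball.

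The main obstacle is the limiting step: arranging the stretching homeomorphisms so that the sequence converges uniformly to a continuous $q$ with the correct fiber structure. This requires choosing each successive homeomorphism to deviate from the identity by a summably small amount outside a shrinking neighborhood of $\Si$, while guaranteeing that the preimage of the crushed point is a genuine nested intersection of cellular $n$-balls rather than a wild continuum. One could alternatively bypass decomposition theory via a Mazur-style infinite swindle, directly building a homeomorphism $A \to \mathbb B^n$ as an infinite composition; either way, the delicate point is the behavior at $\Si$ itself, which is where local flatness is used in an essential way.
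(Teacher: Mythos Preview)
The paper does not prove this statement: it is quoted as a classical background result with a citation to Brown and a terminal \(\qed\). There is therefore no ``paper's own proof'' to compare against; your sketch is an attempt to reconstruct Brown's 1960 argument.

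Your outline is in the right spirit, but two steps do not work as written. First, if \(q\co\mathbb S^n\to\mathbb S^n\) is a surjection that crushes \(B\) to a single point and is a homeomorphism on the interior of \(A\), then \(q(A)=\mathbb S^n\), not a hemisphere: since \(\Sigma=\partial A\subset B\), the boundary sphere is also sent to the crushed point, and surjectivity forces \(q(\mathrm{Int}(A))=\mathbb S^n\ssm\{\text{pt}\}\). So this map exhibits \(\mathrm{Int}(A)\cong\br^n\), not \(A\cong\mathbb B^n\), and an additional step is needed to upgrade to the closed ball. Second, and more seriously, your cellularity claim is circular. The sets \(\mathbb S^n\ssm\phi(\Sigma\times(1/2^k,1])\) are not balls---they are disconnected once \(A\) extends beyond the collar---and if you instead mean the component containing \(B\), you are asserting that the region bounded by the bicollared sphere \(\phi(\Sigma\times\{1/2^k\})\) is an \(n\)-cell, which is exactly the theorem under discussion.

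Brown's actual argument sidesteps this circularity. He first builds a surjection \(g\co\mathbb S^n\to\mathbb S^n\) with \emph{two} exceptional point-inverses (one on each side, obtained by collapsing both ends of the bicollar to the poles), so that \(g\) is a homeomorphism off those two fibers. His key lemma then shows, by pulling back standard balls from the target, that each such fiber is cellular---without assuming Sch\"onflies. Once cellularity is in hand, the quotient argument gives the open-cell conclusion, and the bicollar finishes the closed-ball statement. If you want to complete your write-up, this two-bad-fiber lemma is the missing ingredient.
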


\begin{Thm}[Annulus Theorem {\cite{Kirby,Quinn}}]
The closure of the region co-bounded by two disjoint locally flat \( (n-1) \)-dimensional spheres in \( \mathbb S^n \) is a locally flat annulus.
\qed
\end{Thm}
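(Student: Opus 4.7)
The plan is to first reduce, via the generalized Sch\"onflies theorem, to showing that the closed region between two nested locally flat \( n \)-balls \( B_1 \subset \mathrm{Int}\, B_2 \) in \( \bS^n \) is homeomorphic to \( \bS^{n-1} \times [0,1] \). Indeed, given disjoint locally flat \( (n-1) \)-spheres \( \Si_1, \Si_2 \subset \bS^n \), Sch\"onflies supplies, after relabeling, closed \( n \)-balls \( B_i \) with \( \partial B_i = \Si_i \) and \( B_1 \subset \mathrm{Int}\, B_2 \); the region of interest is then \( R = \overline{B_2 \ssm B_1} \), which is a compact \( n \)-manifold with boundary \( \Si_1 \sqcup \Si_2 \).

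Next, I would apply Brown's collaring theorem for locally flat submanifolds to extract bicollars of \( \Si_1 \) and \( \Si_2 \) inside \( R \), giving a product structure on a neighborhood of \( \partial R \). The question then becomes whether these local product structures can be matched up to give a global product structure on \( R \). Invoking the Brown--Gluck equivalence, this reduces to the so-called \emph{stable homeomorphism conjecture}: every orientation-preserving homeomorphism of \( \br^n \) factors as a finite composition of homeomorphisms, each of which restricts to the identity on some nonempty open set. Roughly, one compares the collar produced at \( \Si_1 \) with the one at \( \Si_2 \) by a self-homeomorphism of \( \bS^{n-1} \times \br \), and the two collars fit together into a global product structure on \( R \) precisely when the comparison homeomorphism is stable.

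Next, for \( n \geq 5 \), I would establish stability by Kirby's torus trick: given a self-homeomorphism \( h \co \br^n \to \br^n \) to be shown stable, transfer \( h \) to the torus \( T^n = \br^n/\bz^n \) using an immersion, lift to the universal cover, and use smoothing theory together with surgery to modify \( h \) within its isotopy class so that it becomes piecewise linear outside a compact set. Descending, one concludes that \( h \) is stable. For \( n = 4 \), Quinn's argument substitutes Freedman's Casson-handle technology for the smoothing and surgery input available in higher dimensions. For \( n \leq 3 \), the statement is classical and follows from triangulability together with the Alexander trick.

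The main obstacle, by a wide margin, is the torus trick itself and, a fortiori, the four-dimensional case due to Quinn: these are the reasons this theorem required decades of work. For the purposes of the present note, however, the annulus theorem is invoked only as a black box, so one simply defers to \cite{Kirby,Quinn}.
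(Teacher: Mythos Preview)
The paper does not prove this statement at all: it is recorded as a known theorem with citations to Kirby and Quinn and closed immediately with a \qed. Your proposal correctly identifies this in its final paragraph, and the sketch you give of the actual argument (Sch\"onflies reduction to nested balls, Brown--Gluck equivalence with the stable homeomorphism conjecture, Kirby's torus trick for \(n\ge 5\), Quinn's work in dimension four, classical methods in low dimensions) is a reasonable high-level outline of the literature being cited. Since the paper's ``proof'' is simply the citation, your proposal and the paper agree: defer to \cite{Kirby,Quinn}.
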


A homeomorphism of \( \br^n \) is \emph{stable} if it can be factored as a composition of homeomorphisms each of which restricts to the identity on an open subset of \( \br^n \). 
The annulus theorem is equivalent to the stable homeomorphism theorem, which states that every homeomorphism of \( \br^n \) is stable \cite{BrownStable}.
From this, together with Alexander's trick, one readily deduces that every orientation-preserving homeomorphism of \( \mathbb R^n \) (resp., \( \mathbb S^n \)) is isotopic to the identity.  
It is also possible to deduce from the stable homeomorphism theorem that every orientation-preserving homeomorphism of \( \mathbb S^n \times \br \) that stabilizes the topological ends is isotopic to the identity; however, it is easier to see this fact by using  the fragmentation lemma (such a proof can be found for \( n =1 \) in \cite{VlamisHomeomorphism}, which can be generalized to higher dimensions).
The fragmentation lemma is a stronger version of the stable homeomorphism theorem that gives control of the open sets being fixed by each homeomorphism in the factorization; it is deduced from the work of Edwards--Kirby \cite{EdwardsDeformations}\footnote{The authors learned of the fragmentation lemma from \cite{MannAutomatic}. We also note, for the sake of extending arguments to the smooth setting, that the fragmentation lemma for diffeomorphisms is an exercise (see \cite[Lemma~2.1]{MannShort}).}.
We record these facts in the following statement.

\begin{Thm}
\label{thm:alexander}
For \( n \in \bn \), every orientation-preserving homeomorphism of \( \mathbb S^n \) (resp., \( \br^n \)) is isotopic to the identity, and every orientation-preserving homeomorphism of \( \mathbb S^n \times \br \) stabilizing the ends is isotopic to the identity. 
\end{Thm}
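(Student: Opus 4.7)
The plan is to reduce all three statements to Alexander's trick applied to homeomorphisms that are the identity on a nonempty open set. Specifically, if \( g \in \Homeo^+(\br^n) \) is the identity on a ball \( B(0,r) \), then setting \( H_t(x) = \tfrac{1}{t}g(tx) \) for \( t \in (0,1] \) and \( H_0 = \mathrm{id} \) defines a continuous isotopy from \( g \) to the identity, since \( g(tx) = tx \) whenever \( |tx| < r \); the same formula extends to an isotopy of \( \mathbb S^n \) fixing the point at infinity, because \( g(tx) \to \infty \) as \( x \to \infty \).

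For \( \br^n \), the plan is to apply the stable homeomorphism theorem to write any orientation-preserving \( f \) as a finite composition \( g_1 \circ \cdots \circ g_k \) in which each \( g_i \) is the identity on a nonempty open set, then conjugate each factor by an appropriate translation so that the fixed set contains the origin, and finally concatenate the Alexander isotopies above to obtain an isotopy from \( f \) to \( \mathrm{id} \). For \( \mathbb S^n \), I would first isotope \( f \) to fix a chosen point \( p \)---using that \( SO(n+1) \) acts transitively and is path-connected---and then run the previous argument in the chart \( \mathbb S^n \ssm \{p\} \cong \br^n \), noting that the resulting Alexander isotopies extend continuously by fixing \( p \).

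For \( \mathbb S^n \times \br \), I would follow the strategy carried out for \( n = 1 \) in \cite{VlamisHomeomorphism}, which generalizes directly using the generalized Sch\"onflies and annulus theorems: apply the fragmentation lemma to decompose an end-stabilizing homeomorphism \( f \) into a product of homeomorphisms each supported in an open set homeomorphic to \( \br^{n+1} \), and then apply Alexander's trick to each piece. The hypothesis that \( f \) stabilizes the ends is clearly necessary, as any isotopy to the identity must preserve the ends. The main obstacle is this last case: ordinary fragmentation presupposes that the homeomorphism already lies in the identity component, which is what we are trying to establish. The resolution is to use the stronger form of fragmentation coming from the Edwards--Kirby local contractibility of \( \Homeo(\mathbb S^n \times \br) \), combined with a preliminary reduction---via a collaring argument using the annulus theorem---to a homeomorphism that agrees with the identity outside a compact subset of \( \mathbb S^n \times \br \), where the previous machinery applies.
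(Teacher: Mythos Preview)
Your proposal is correct and follows essentially the same approach as the paper. The paper does not give a formal proof of this theorem; it only sketches the argument in the paragraph preceding the statement---stable homeomorphism theorem plus Alexander's trick for \( \br^n \) and \( \mathbb S^n \), and the Edwards--Kirby fragmentation lemma (with a pointer to \cite{VlamisHomeomorphism} for \( n=1 \)) for \( \mathbb S^n \times \br \)---and your write-up fleshes out exactly this outline, including correctly flagging and resolving the apparent circularity in invoking fragmentation for the annulus case.
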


The following lemma is required to glue together homeomorphisms defined on disjoint pieces of a sphere, especially in the change of coordinates corollary below and the construction of the conjugating map in \Cref{prop:conjugate}.
Before stating the lemma, we need a notion of orientation compatibility for disjoint embeddings of spheres.
 
Let \( \iota_1, \iota_2 \co \mathbb S^{n-1} \to \mathbb S^n \) be embeddings with disjoint locally flat images.
Then the annulus theorem implies that there exists a homeomorphism \( \vp \co \mathbb S^{n-1} \times [0,1] \to \mathbb S^n \) such that \( \vp(x,0) = \iota_1(x) \). 
Let \( \iota\co \mathbb S^{n-1} \to \mathbb S^n \) be given by \( \iota(x) = \vp(x,1) \).
Then \[ \iota^{-1}\circ \iota_2 \co \mathbb S^{n-1} \to \mathbb S^{n-1} \] is a homeomorphism.
We say \( \iota_1 \) and \( \iota_2 \) are \emph{compatibly oriented} if \( \iota^{-1} \circ \iota_2 \) is orientation preserving.

\begin{Lem}
\label{lem:isotopy}
Let \( \iota_1, \iota_2 \co \mathbb S^{n-1} \to \mathbb S^{n} \) be embeddings such that \( \Sigma_1 := \iota_1(\mathbb S^{n-1}) \) and \( \Sigma_2 := \iota_2(\mathbb S^{n-1}) \) are locally flat and disjoint. 
If \( \iota_1 \) and \( \iota_2 \) are compatibly oriented, then there exists an isotopy \( \vp \co \mathbb S^{n-1} \times [0,1] \to \mathbb S^n \) between \( \iota_1 \) and \( \iota_2 \) whose image is the locally flat annulus co-bounded by \( \Sigma_1 \) and \( \Sigma_2 \). 
\end{Lem}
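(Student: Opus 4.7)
The plan is to construct $\vp$ in three stages: first parametrize the locally flat annulus co-bounded by $\Si_1$ and $\Si_2$ via the annulus theorem, then normalize this parametrization so that its $t=0$ slice agrees with $\iota_1$, and finally twist by an isotopy of $\mathbb S^{n-1}$ to force the $t=1$ slice to agree with $\iota_2$.

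In more detail, first I would apply the annulus theorem to produce a homeomorphism $\psi\co \mathbb S^{n-1}\times[0,1]\to A$, where $A$ is the locally flat annulus co-bounded by $\Si_1$ and $\Si_2$. After possibly relabelling the boundary components of the domain, we may assume $\psi(\mathbb S^{n-1}\times\{0\}) = \Si_1$. Writing $\psi_0 = \psi(\cdot,0)$ and setting
\[ \tilde\psi(x,t) := \psi\bigl(\psi_0^{-1}(\iota_1(x)),\, t\bigr), \]
we obtain a homeomorphism $\tilde\psi\co \mathbb S^{n-1}\times[0,1]\to A$ with $\tilde\psi(x,0)=\iota_1(x)$.

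Now $\iota := \tilde\psi(\cdot,1)$ parametrizes $\Si_2$, but possibly differently from $\iota_2$; the discrepancy is the self-homeomorphism $g := \iota_2^{-1}\circ\iota$ of $\mathbb S^{n-1}$. By the compatibility hypothesis, $\iota^{-1}\circ\iota_2 = g^{-1}$ is orientation preserving, hence so is $g$. Applying \Cref{thm:alexander} to $\mathbb S^{n-1}$ (the case $n=1$ is immediate, since an orientation-preserving self-homeomorphism of $\mathbb S^0$ is the identity), I obtain an isotopy $\{g_t\}_{t\in[0,1]}$ with $g_0=\mathrm{id}$ and $g_1=g$. Define
\[ \vp(x,t) := \tilde\psi\bigl(g_t(x),\, t\bigr). \]
Then $\vp(x,0)=\iota_1(x)$ and $\vp(x,1)=\iota(g(x))=\iota_2(x)$; each slice $\vp(\cdot,t)$ is an embedding of $\mathbb S^{n-1}$ into $A$, and $\vp$ itself is a homeomorphism onto $A$ because $(x,t)\mapsto(g_t(x),t)$ is a continuous bijection of the compact Hausdorff space $\mathbb S^{n-1}\times[0,1]$.

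The only subtle point—not really an obstacle—is verifying that the compatibility hypothesis applies to our chosen $\tilde\psi$: the definition of ``compatibly oriented'' depends a priori on an auxiliary parametrization of $A$, and one needs the orientation of $\iota^{-1}\circ\iota_2$ to be independent of that choice. This follows from the observation that any self-homeomorphism of $\mathbb S^{n-1}\times[0,1]$ restricting to the identity on $\mathbb S^{n-1}\times\{0\}$ is orientation preserving (by local considerations near the fixed boundary), and therefore induces an orientation-preserving homeomorphism on $\mathbb S^{n-1}\times\{1\}$. Once this bookkeeping is in place, the construction above is entirely elementary.
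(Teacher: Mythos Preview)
Your approach is essentially identical to the paper's: parametrize the annulus with the correct $t=0$ boundary, then twist by an isotopy of $\mathbb S^{n-1}$ to fix the $t=1$ boundary. The paper simply asserts the existence of your $\tilde\psi$ (calling it $\iota_A$) and uses $\tau = \iota^{-1}\circ\iota_2$ where you use $g = \iota_2^{-1}\circ\iota$.

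That said, there is a small slip in your final computation: with $g := \iota_2^{-1}\circ\iota$ one has $\iota_2 = \iota\circ g^{-1}$, so $\vp(x,1) = \iota(g(x))$ equals $\iota_2(x)$ only if $g^2=\mathrm{id}$. You want the isotopy to end at $g^{-1}$ rather than $g$ (equivalently, define $g := \iota^{-1}\circ\iota_2$, which is exactly the paper's $\tau$). With that one-character fix the argument is correct and matches the paper's proof, your extra paragraph on the well-definedness of ``compatibly oriented'' being a welcome addition.
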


\begin{proof}
Let \( A \) be the locally flat annulus co-bounded by \( \Sigma_1 \) and \( \Sigma_2 \). 
Choose an embedding \( \iota_A \co \mathbb S^{n-1} \times [0,1] \to \mathbb S^n \) whose image is \( A \) and such that \( \iota_A|_{\mathbb S^{n-1}\times\{0\}} = \iota_1 \).
Let \( \iota = \iota_A|_{\mathbb S^{n-1}\times\{1\}} \).
Then  \( \tau = \iota^{-1}\circ  \iota_2 \) is an orientation-preserving homeomorphism of \( \mathbb S^{n-1} \); hence, \( \tau \) is isotopic to the identity. 
Choose an isotopy \( H\co \mathbb S^{n-1} \times [0,1] \to \mathbb S^{n-1} \) such that \( H(x,0) = x \) and \( H(x,1) = \tau(x) \). 
Define \( h \co \mathbb S^{n-1} \times [0,1] \to \mathbb S^{n-1} \times [0,1] \) by \( h(x,t) = (H(x,t), t) \).
It is readily checked that \( h \) is a homeomorphism and that \( \vp:=  \iota_A\circ h \) is the desired map.
\end{proof}

We record as a corollary the main ways in which we will use the results above, each of which being a type of change of coordinates.

\begin{Cor}[Three change of coordinates principles]
\label{cor:coordinates}
Let \( n \in \bn \).
\begin{enumerate}[(1)]

\item Given any two locally flat \( (n-1) \)-dimensional spheres \( \Sigma \) and \( \Sigma' \) in \( \br^n \),  there exists a compactly supported ambient homeomorphism mapping \( \Sigma \) onto \( \Sigma' \).

\item Given two locally flat \( n \)-dimensional spheres \( \Sigma \) and \( \Sigma' \) in \( \mathbb S^n \times [0,1] \) each separating \( \mathbb S^n \times \{0\} \) from \( \mathbb S^n \times \{1\} \), there exists an ambient homeomorphism mapping \( \Sigma \) onto \( \Sigma' \) and fixing \( \mathbb S^n \times \{0,1\} \) pointwise. 

\item Let \( x, y \in \mathbb S^n \).
Given a sequence of locally flat annuli \( \{A_k\}_{k\in\bz} \) with pairwise-disjoint interiors  such that \( A_k \) and \( A_{k+1} \) share a boundary sphere and such that \( \mathbb S^n \ssm\{x,y\} = \bigcup_{k\in\bz} A_k \), there exists \( \tau \in \Homeo^+(\mathbb S^n) \) such that \( \tau(x) = x \), \( \tau(y) = y \), and \( \tau(A_k) = A_{k+1} \). \qed
\end{enumerate}
\end{Cor}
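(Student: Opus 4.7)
The plan is to derive all three items from the generalized Schönflies theorem, the annulus theorem, and \Cref{lem:isotopy}, by reducing each case to the standard product model.

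For (1), I would pass to the one-point compactification \( \mathbb S^n = \br^n \cup \{\infty\} \) and choose a large round sphere \( \Sigma_0 \subset \br^n \) whose bounded complementary ball \( D_0 \) contains both \( \Sigma \) and \( \Sigma' \). By the generalized Schönflies theorem, each of \( \Sigma \) and \( \Sigma' \) bounds a closed ball \( D \), \( D' \) in \( \br^n \), and the annulus theorem gives locally flat annuli co-bounded by \( \Sigma_0 \) with \( \Sigma \) and with \( \Sigma' \). Combining Schönflies on \( D \), \( D' \) with the annulus theorem on the complementary annuli, and invoking \Cref{lem:isotopy} to straighten out orientations along the shared boundary \( \Sigma_0 \), one builds homeomorphisms \( h, h' \co D_0 \to D_0 \) that fix \( \Sigma_0 \) pointwise and send \( \Sigma \), respectively \( \Sigma' \), onto a fixed standard sphere in \( D_0 \). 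Then \( (h')^{-1} \circ h \), extended by the identity outside \( D_0 \), is the required compactly supported ambient homeomorphism.

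For (2), since \( \Sigma \) separates the boundary components of \( \mathbb S^n \times [0,1] \), the annulus theorem shows that each of the two closed regions it determines is a locally flat annulus, and similarly for the two regions determined by \( \Sigma' \). Using \Cref{thm:alexander} (orientation-preserving homeomorphisms of \( \mathbb S^n \) are isotopic to the identity) in the manner of \Cref{lem:isotopy}, I would parametrize the piece between \( \mathbb S^n \times \{0\} \) and \( \Sigma \) as a product \( \mathbb S^n \times [0, 1/2] \) fixing \( \mathbb S^n \times \{0\} \) pointwise, and the piece between \( \Sigma \) and \( \mathbb S^n \times \{1\} \) as \( \mathbb S^n \times [1/2, 1] \) fixing \( \mathbb S^n \times \{1\} \) pointwise; do the same for \( \Sigma' \). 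Composing one parametrization with the inverse of the other yields a homeomorphism of \( \mathbb S^n \times [0,1] \) that fixes \( \mathbb S^n \times \{0,1\} \) pointwise and carries \( \Sigma \) onto \( \Sigma' \).

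For (3), I would apply (2) recursively. By the annulus theorem together with \Cref{lem:isotopy}, each \( A_k \) is homeomorphic to \( \mathbb S^{n-1} \times [0,1] \) with either boundary component playing the role of \( \mathbb S^{n-1} \times \{0\} \). Starting from any orientation-preserving homeomorphism \( \tau_0 \co A_0 \to A_1 \), inductively apply item (2) (now in the appropriate codimension) to produce \( \tau_k \co A_k \to A_{k+1} \) whose restriction to \( A_k \cap A_{k-1} \) agrees with \( \tau_{k-1}|_{A_{k-1}\cap A_k} \); matching boundary data is possible because (2) only constrains the map on the two ``outer'' spheres of each annulus, leaving the inner gluing sphere free to be prescribed. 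Setting \( \tau = \tau_k \) on \( A_k \) and \( \tau(x) = x \), \( \tau(y) = y \) produces a bijection; continuity at \( x \) and \( y \) follows because, with respect to any metric inducing the topology of \( \mathbb S^n \), the diameters of \( A_k \) must tend to zero as \( k \to \pm\infty \) (since the \( A_k \) have pairwise disjoint interiors and accumulate only at \( \{x,y\} \)). A continuous bijection of the compact Hausdorff space \( \mathbb S^n \) is a homeomorphism, and orientation-preservation is built in by construction.

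The main obstacle throughout is orientation compatibility along shared boundary spheres when gluing parametrizations of adjacent annuli; this is exactly what \Cref{lem:isotopy} is designed to handle, and every case above is ultimately reduced to that lemma. A secondary bookkeeping issue in (3) is arranging the inductive match of \( \tau_k \) with \( \tau_{k-1} \) on the common boundary sphere, which is addressed by choosing freely the parametrization of \( A_{k+1} \) near its boundary with \( A_k \) before invoking (2).
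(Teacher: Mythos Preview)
The paper gives no proof of this corollary; it is stated with a \qed immediately after the enumeration, signalling that the authors regard it as a direct consequence of the generalized Sch\"onflies theorem, the annulus theorem, and \Cref{lem:isotopy}. Your proposal supplies exactly that kind of argument, using precisely those three ingredients, and is essentially correct.

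One small imprecision worth flagging: in your argument for (3), invoking item (2) is not quite the right citation. Item (2) produces a \emph{self}-homeomorphism of a fixed annulus fixing both boundary spheres pointwise, whereas in (3) you need a homeomorphism \( A_k \to A_{k+1} \) between two \emph{different} annuli extending a prescribed map on one boundary sphere. The content you actually use is just that any orientation-preserving homeomorphism between boundary spheres of two copies of \( \mathbb S^{n-1} \times [0,1] \) extends across the annuli---which follows immediately from \Cref{thm:alexander} via the isotopy trick in the proof of \Cref{lem:isotopy}. You clearly have this in mind (you even say ``in the manner of \Cref{lem:isotopy}'' earlier), so the issue is one of phrasing rather than substance. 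Also, in (2) you should note that applying the annulus theorem inside \( \mathbb S^n \times [0,1] \) requires first embedding it into \( \mathbb S^{n+1} \), since the theorem as stated in the paper concerns spheres in \( \mathbb S^{n+1} \); this is routine but worth saying.
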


We can now introduce the notion of a topologically loxodromic homeomorphism. 
Tsuboi uses the terminology ``topologically hyperbolic homeomorphism'', but the definition given here is seemingly more stringent than the one he gives, and so we introduce a slight modification in our nomenclature to recognize this potential difference.

\begin{Def}
An orientation-preserving homeomorphism \( \tau \co \mathbb S^n \to \mathbb S^n \) is \emph{topologically loxodromic} if there exists \( \tau^+, \tau^- \in \mathbb S^n \) fixed by \( \tau \) and a sequence of  locally flat annuli \( \{ A_n\}_{n\in\bz} \) with pairwise-disjoint interiors such that 
\begin{enumerate}[(i)]
\item \( \mathbb S^n \ssm \{\tau_\pm\} = \bigcup_{k\in\bz} A_k \),
\item \( A_k \) shares a boundary component with \( A_{k+1} \), 
\item \( \tau(A_k) = A_{k+1} \), and
\item \( \lim_{k\to\pm\infty} \tau^k(x) = \tau^\pm \) for all \( x \in \mathbb S^n \ssm \{\tau^+, \tau^-\} \).
\end{enumerate}
We the say the sequence of annuli \( \{A_n\}_{n\in\bn} \) is \emph{suited} to \( \tau \), and we call \( \tau^+ \) the \emph{sink} of \( \tau \) and \( \tau^- \) the \emph{source}. 
Note that every homeomorphism of \( \br^n \) and \( \mathbb S^{n-1} \times \br \) can be viewed as a homeomorphism of \( \mathbb S^n \) that fixes one or two points, respectively, corresponding to the ends, and so we say a homeomorphism of either of these spaces is topologically loxodromic if it is a topologically loxodromic homeomorphism as a homeomorphism of \( \mathbb S^n \). 
\end{Def}

The main motivating examples of topologically loxodromic homeomorphisms are loxodromic M\"obius transformations of \( \mathbb S^n \),  dilations of \( \mathbb R^n \), and translations of \( \mathbb S^n \times \br \) of the form \( (x,t) \mapsto (x, t+t_0) \) for some \( t_0 \in \br \).  
Of course the latter two examples are just loxodromic M\"obius transformations in different coordinates.

In the next proposition, we prove that any two topologically loxodromic homeomorphisms of \( \mathbb S^n \) are conjugate.  
This is the key fact in Tsuboi's argument, and the following results appear as part of Tsuboi's proof in reference to particular maps.
We pull the ideas out here and formalize them in the general setting. 
As a corollary, we provide the analogous statement for \( \br^n \) and \( \mathbb S^n \times \br \).

\begin{Prop}
\label{prop:conjugate}
Any two topologically loxodromic homeomorphisms in \( \Homeo_0(\mathbb S^n) \) are conjugate in \( \Homeo_0(\mathbb S^n) \). 
\end{Prop}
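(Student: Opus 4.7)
The plan is to construct a conjugating homeomorphism $h \in \Homeo_0(\mathbb S^n)$ satisfying $h\tau_1 h^{-1} = \tau_2$ by defining $h$ equivariantly across the annular decomposition of $\mathbb S^n$ suited to $\tau_1$. First, applying \Cref{cor:coordinates} to produce a homeomorphism of $\mathbb S^n$ carrying the source-sink pair $(\tau_1^-,\tau_1^+)$ to $(\tau_2^-,\tau_2^+)$, I may assume after a preliminary conjugation of $\tau_1$ that $\tau_1^\pm = \tau_2^\pm =: p^\pm$. Let $\{A_k\}$ and $\{B_k\}$ denote annular sequences suited to $\tau_1$ and $\tau_2$, respectively, and write $\partial_-A_k$ and $\partial_+A_k$ for the boundary spheres of $A_k$ shared with $A_{k-1}$ and $A_{k+1}$.

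Next, I would build $h$ annulus by annulus. Choose any orientation-preserving homeomorphism $\alpha \co \partial_-A_0 \to \partial_-B_0$ between these locally flat $(n-1)$-spheres, and define $\beta \co \partial_+A_0 \to \partial_+B_0$ by $\beta := \tau_2 \circ \alpha \circ \tau_1^{-1}$. Using \Cref{lem:isotopy}, extend $\alpha$ and $\beta$ to a homeomorphism $h_0 \co A_0 \to B_0$ restricting to $\alpha$ on $\partial_-A_0$ and to $\beta$ on $\partial_+A_0$. Then define $h \co \mathbb S^n \to \mathbb S^n$ by setting $h(p^\pm) := p^\pm$ and, for $x \in A_k$,
\[ h(x) := \tau_2^k \circ h_0 \circ \tau_1^{-k}(x). \]
The formula for $\beta$ is calibrated so that on each shared sphere $\partial_+A_k = \partial_-A_{k+1}$ the two candidate formulas for $h$ coincide: this reduces to the identity $h_0 \circ \tau_1 = \tau_2 \circ h_0$ on $\partial_-A_0$, which holds by construction. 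The intertwining relation $h\tau_1 = \tau_2 h$ on $\mathbb S^n \ssm \{p^\pm\}$ is then immediate from the equivariant formula, and each restriction $h|_{A_k}$ is a composition of orientation-preserving maps.

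The main obstacle is to verify continuity of $h$ at $p^\pm$. For this, I would note that $\bigcup_{|k|\leq M} A_k$ is a finite union of locally flat annuli sharing consecutive boundary spheres, hence itself a locally flat annulus. By the generalized Sch\"onflies theorem, its complement in $\mathbb S^n$ is a disjoint union of two open balls $D_M^+$ and $D_M^-$ with $p^\pm \in D_M^\pm$, concretely
\[ D_M^+ = \{p^+\} \cup \bigcup_{k > M} A_k, \qquad D_M^- = \{p^-\} \cup \bigcup_{k < -M} A_k. \]
Since $\overline{D_M^+} = D_M^+ \cup \partial_+A_M$, the separating spheres $\partial_+A_M$ are pairwise disjoint, and the $D_M^+$ are nested, a direct nested-intersection calculation gives $\bigcap_M \overline{D_M^+} = \{p^+\}$; a standard finite-intersection-property argument against any open neighborhood of $p^+$ then shows that $\{D_M^+\}_{M \in \bn}$ is a neighborhood basis at $p^+$, and similarly at $p^-$. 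The analogous balls built from $\{B_k\}$ form neighborhood bases at $p^\pm$, and since $h$ carries each such basis element on the $\tau_1$-side onto its counterpart on the $\tau_2$-side, $h$ is continuous at $p^\pm$. A continuous bijection from compact $\mathbb S^n$ to Hausdorff $\mathbb S^n$ is a homeomorphism, and $h$ is orientation-preserving on the dense open set $\mathbb S^n \ssm \{p^\pm\}$, so $h \in \Homeo^+(\mathbb S^n) = \Homeo_0(\mathbb S^n)$ by \Cref{thm:alexander}.
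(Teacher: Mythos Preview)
Your proof is correct and follows essentially the same equivariant-extension strategy as the paper: build a homeomorphism $h_0$ between fundamental annuli via \Cref{lem:isotopy}, then propagate by $\tau_2^k \circ h_0 \circ \tau_1^{-k}$ and check agreement on the shared boundary spheres. The paper skips your preliminary alignment of source--sink pairs (it simply sets $h(\sigma_\pm)=\tau_\pm$) and omits the continuity argument at the fixed points that you supply; note also that \Cref{cor:coordinates} does not literally furnish the point-pair alignment you cite, though that step is both elementary and, as the paper's version shows, unnecessary.
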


\begin{proof}
Let \( \tau, \sigma \in \Homeo_0(\mathbb S^n) \) be topologically loxodromic. 
Let \( \{ A_n\}_{n\in\bz} \) and \( \{ B_n\}_{n\in\bz} \) be sequences of locally flat annuli suited to \( \tau \) and \( \sigma \), respectively. 
Let \( \Sigma_A \) be the component of \( A_0 \) such that \( \partial A_0 = \Sigma_A \cup \tau(\Sigma_A) \), and similarly, define \( \Sigma_B \) so that \( \partial B_0 = \Sigma_B \cup \sigma(\Sigma_B) \). 

Fix embeddings \( \iota_A, \iota_B \co \mathbb S^{n-1} \to \mathbb S^n \) such that the images of \( \iota_A \) and \( \iota_B \) are \( \Sigma_A \) and \( \Sigma_B \), respectively. 
Applying \Cref{lem:isotopy}, we obtain embeddings \( \vp_A, \vp_B \co \mathbb S^{n-1}\times [0,1] \to \mathbb S^n \) whose images are \( A_0 \) and \( B_0 \), respectively, and such that \( \vp_A \) (resp., \( \vp_B \)) is an isotopy between \( \iota_A \) and \( \tau\circ \iota_A \) (resp., \( \iota_B \) and \( \sigma \circ \iota_B \)).  
Set \( \vp = \vp_A \circ \vp_B^{-1} \).

Define \( h \co \mathbb S^n \to \mathbb S^n \) by \( h(\sigma_\pm) = \tau_\pm \) and \( h(x) = (\tau^n \circ \vp \circ \sigma^{-n}) (x) \) for \( x \in B_n \).
It is now readily checked that \( h \) is well-defined for the elements of \( B_n \cap B_{n+1} \), establishing that \( h \) is a well-defined orientation-preserving homeomorphism of \( \mathbb S^n \).
Now, for \( x \in A_n \),
\begin{align*}
(h\circ \sigma \circ h^{-1})(x) &= \left[(\tau^{n+1}\circ \vp \circ \sigma^{-(n+1)})\circ \sigma \circ (\sigma^n \circ \vp^{-1} \circ \tau^{-n})\right](x)\\
	&= \tau(x).
\end{align*}
Hence, \( \tau = h \circ \sigma \circ h^{-1} \).
\end{proof}

\begin{Cor}
\label{cor:conjugate}
Let \( G \) denote either \( \Homeo_0(\br^n) \) or \( \Homeo_0(\mathbb S^n \times \br) \).
Any two topologically loxodromic homeomorphisms in \( G \) that share a sink or a source are conjugate in \( G \).
\end{Cor}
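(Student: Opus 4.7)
The plan is to apply Proposition~\ref{prop:conjugate} to \( \tau \) and \( \sigma \), viewed as homeomorphisms of the ambient sphere (\( \mathbb S^n \) if \( G = \Homeo_0(\br^n) \), and \( \mathbb S^{n+1} \) if \( G = \Homeo_0(\mathbb S^n \times \br) \)), and then verify that the conjugator \( h \) produced there actually restricts to an element of \( G \). The essential observation enabling this is that a topologically loxodromic homeomorphism has \emph{exactly} two fixed points, namely its sink \( \tau^+ \) and source \( \tau^- \): the dynamical condition \( \lim_{k \to \pm\infty} \tau^k(x) = \tau^\pm \) for \( x \notin \{\tau^+,\tau^-\} \) rules out any third fixed point.

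Combining this observation with the definition of \( G \) ties the distinguished point(s) cutting \( G \) out of the ambient homeomorphism group to the sink-source pair of any loxodromic element in \( G \). For \( G = \Homeo_0(\br^n) \), the point at infinity must be one of \( \tau^\pm \); for \( G = \Homeo_0(\mathbb S^n \times \br) \), the set \( \{\tau^+, \tau^-\} \) is exactly the two end-points of the ambient \( \mathbb S^{n+1} \). A short case analysis under the sharing hypothesis then shows that the conjugator \( h \), which by construction satisfies \( h(\sigma^\pm) = \tau^\pm \), automatically fixes all such distinguished points. For example, in the \( \br^n \) case, if \( \tau^+ = \sigma^+ = p \) for some finite \( p \), then the remaining fixed point of each of \( \tau \) and \( \sigma \) must be \( \infty \), whence \( \tau^- = \sigma^- = \infty \) and \( h(\infty) = h(\sigma^-) = \tau^- = \infty \); the remaining subcases (shared point at infinity, or a shared source in place of a shared sink) are analogous. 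In the \( \mathbb S^n \times \br \) case, sharing any one of the fixed points forces \( \{\tau^\pm\} = \{\sigma^\pm\} \) to equal the two-element set of end-points, and so \( h \) fixes both.

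To finish, I would note that \( h \) lies in \( \Homeo_0 \) of the ambient sphere, which equals the orientation-preserving group by \Cref{thm:alexander}, so the restriction of \( h \) to the complement of the distinguished point(s) is orientation-preserving; in the product case it also stabilizes each end by construction, and a further appeal to \Cref{thm:alexander} then places this restriction in \( \Homeo_0(\mathbb S^n \times \br) \). I do not anticipate any genuine obstacle here: the only step requiring care is the case analysis above, whose content is simply that the no-other-fixed-points observation forces ``sharing a sink or source'' to be enough for \( h \) to preserve all the data needed to descend to \( G \).
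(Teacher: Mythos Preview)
Your proposal is correct and follows essentially the same approach as the paper: view $\tau$ and $\sigma$ in the ambient sphere, apply Proposition~\ref{prop:conjugate}, and observe that the conjugator $h$ (which satisfies $h(\sigma^\pm)=\tau^\pm$) fixes the distinguished point(s). The paper's proof is a terse two-line version of exactly this; your write-up makes explicit the key observation the paper leaves implicit, namely that a topologically loxodromic map has no fixed points besides its sink and source, which forces the point(s) at infinity to lie in $\{\tau^\pm\}\cap\{\sigma^\pm\}$ and hence to be fixed by $h$.
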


\begin{proof}
Viewing two given topologically loxodromic homeomorphisms in \( G \) as homeomorphisms of \( \mathbb S^n \) with a shared sink, a shared source, or both, the conjugating map constructed in \Cref{prop:conjugate} will preserve any shared source or sink, and hence the conjugation occurs in the appropriate group.
\end{proof}

\section{Proofs}

We prove the theorem first for annuli then for Euclidean spaces.  

\begin{Thm}
\label{thm:annulus}
For \( n \in \bn \), every element of \( \Homeo_0(\mathbb S^{n} \times \br) \) is a commutator. 
\end{Thm}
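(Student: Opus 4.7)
The plan is to apply the paper's central philosophy: $f$ is a commutator if and only if there is some $g$ for which $g$ and $gf$ are conjugate. Combined with \Cref{cor:conjugate}, it thus suffices to construct $g \in \Homeo_0(\mathbb S^n \times \br)$ so that both $g$ and $gf$ are topologically loxodromic with a common sink and source. Viewing $\mathbb S^n \times \br$ as $\mathbb S^{n+1} \setminus \{p_+, p_-\}$, where $p_\pm$ are the two ends, the map $f$ extends to a homeomorphism of $\mathbb S^{n+1}$ fixing $p_\pm$, and I will aim for $g$ and $gf$ to share sink $p_+$ and source $p_-$.

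The first step will be to inductively construct a doubly-infinite sequence $\{\Sigma_k\}_{k \in \bz}$ of pairwise disjoint, locally flat $n$-spheres in $\mathbb S^n \times \br$, each separating $p_+$ from $p_-$, with $\Sigma_k \to p_\pm$ as $k \to \pm\infty$, and with the key property that $f(\Sigma_k)$ lies strictly in the open annulus between $\Sigma_k$ and $\Sigma_{k+1}$ for every $k$. The induction exploits that $f$ stabilizes both ends: after choosing $\Sigma_k$, the image $f(\Sigma_k)$ is a compact locally flat sphere, and one picks $\Sigma_{k+1}$ deep enough in a neighborhood of $p_+$ so that $\Sigma_{k+1}$ lies strictly on the $p_+$ side of $f(\Sigma_k)$, yielding the desired interleaving (possibly after perturbing spheres to be disjoint from their $f$-images).

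With the spheres in hand, ordering from $p_-$ to $p_+$ produces the interleaved sequence
\[ \cdots < \Sigma_{k-1} < f(\Sigma_{k-1}) < \Sigma_k < f(\Sigma_k) < \Sigma_{k+1} < \cdots .\]
Using \Cref{lem:isotopy} and \Cref{cor:coordinates}, I will build $g$ that shifts this interleaved sequence by one position; in particular, $g(\Sigma_k) = f(\Sigma_k)$ and $g(f(\Sigma_k)) = \Sigma_{k+1}$ for every $k$. Concretely, I first select orientation-preserving homeomorphisms between consecutive spheres of the interleaved sequence and then use the isotopies from \Cref{lem:isotopy} to extend them across each interleaved annulus, the gluings being consistent by design. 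The resulting $g$ is topologically loxodromic with the interleaved annuli suited to it, and with sink $p_+$ and source $p_-$. Moreover, $gf(\Sigma_k) = g(f(\Sigma_k)) = \Sigma_{k+1}$ for all $k$, so $gf$ is topologically loxodromic with the annuli $\{A_k\}$ between consecutive $\Sigma_k$'s suited to it, having the same sink and source. \Cref{cor:conjugate} then furnishes an $h \in \Homeo_0(\mathbb S^n \times \br)$ with $hgh^{-1} = gf$, whence $f = g^{-1}(hgh^{-1}) = [g^{-1}, h]$.

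The main obstacle will be the sphere-construction step: arranging $f(\Sigma_k)$ strictly on the $p_+$ side of $\Sigma_k$ (so that it lies in $A_k$ rather than in $A_{k-1}$). For an $f$ whose ``net direction'' is toward $p_-$, such as a downward translation, no such spheres exist with this orientation, and one must instead reverse the roles of $p_+$ and $p_-$; for an $f$ preserving each horizontal slice $\mathbb S^n \times \{t\}$, the interleaving degenerates with $f(\Sigma_k) = \Sigma_k$ as a set, and the construction of $g$ must simplify accordingly (e.g., to a pure translation in the $\br$-factor). Handling these variations uniformly will be the crux, but the flexibility of locally flat spheres together with $f$'s end-stabilizing behavior will allow the inductive construction to succeed in every case.
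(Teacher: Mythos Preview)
Your overall strategy---find $g$ so that $g$ and $gf$ are both topologically loxodromic with a shared sink, then invoke \Cref{cor:conjugate}---is exactly the paper's, and your endgame is fine. The genuine gap is in the interleaving step, which you already flag as the crux but do not resolve. Your construction requires, for each $k$, a separating locally flat sphere $\Sigma_k$ with $f(\Sigma_k)\cap\Sigma_k=\varnothing$; your proposed dichotomy (either interleave toward $p_+$, or toward $p_-$, or else $f$ preserves every horizontal slice) is not exhaustive. Take $n=1$ and $f(\theta,t)=(\theta,\,t+\cos\theta)$ on $\mathbb S^1\times\br$. The vertical lines $\{\pi/2\}\times\br$ and $\{3\pi/2\}\times\br$ are fixed pointwise by $f$, and any locally flat circle separating the two ends must meet each of these lines; hence every separating sphere contains a fixed point of $f$ and therefore meets its own $f$-image. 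No perturbation helps, $f$ does not preserve horizontal slices, and reversing the roles of $p_\pm$ changes nothing. So for this $f$ your interleaved sequence simply cannot be built.

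The paper sidesteps this by never asking for $\Sigma_k$ and $f(\Sigma_k)$ to be disjoint. Instead it chooses the $\Sigma_k$ first and then fattens each to a product annulus $A_k$ large enough to contain $\Sigma_k\cup f(\Sigma_k)$ in its interior, with the $A_k$ pairwise disjoint in their interiors and exhausting $\mathbb S^n\times\br$. A preliminary loxodromic $g'$ is chosen with $g'(A_k)=A_{k+1}$; then $g'(f(\Sigma_k))$ and $\Sigma_{k+1}$ both sit in the interior of $A_{k+1}$, and a correction $h_k$ supported there carries one to the other. Setting $g=(\prod h_k)\circ g'$ gives $g(A_k)=A_{k+1}$ (so $g$ is loxodromic, suited to $\{A_k\}$) and $(g\circ f)(\Sigma_k)=\Sigma_{k+1}$ (so $g\circ f$ is loxodromic, suited to the annuli between consecutive $\Sigma_k$'s). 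The two-tier structure---outer annuli for $g$, inner spheres for $gf$---is what makes the argument uniform and removes any need for $\Sigma_k$ and $f(\Sigma_k)$ to be comparable.
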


\begin{proof}
Let \( f \in \Homeo_0(\mathbb S^{n} \times \br) \). 
Let \( \Sigma_0 = \mathbb S^{n} \times \{0\} \), and set \( t_0 = 0 \). 
Let \( n_0 \in \bn \) such that \( A_0 := \mathbb S^{n} \times [-n_0, n_0] \) contains \( \Sigma_0 \cup f(\Sigma_0) \) in its interior. 
Now, choose \( t_1, n_1 \in \bn \) such that \( \Sigma_{1} = \mathbb S^{n} \times \{t_1\} \)  satisfies \( \Sigma_{1} \cup f(\Sigma_{1}) \) is contained in the interior of \( A_{1} := \mathbb S^{n} \times [n_0, n_1] \). 
Continuing in this fashion in both directions, we construct sequences \( \{n_k\}_{k\in\bz} \) and \( \{t_k\}_{k\in\bz} \) of integers such that, by setting \( \Sigma_k =  \mathbb S^{n} \times \{t_k\} \) and \( A_k = \mathbb S^{n} \times [n_k, n_{k+1}] \), we have \( \Sigma_k \cup f(\Sigma_k) \) is contained in the interior of \( A_k \). 

Now, let \( g' \in \Homeo_0(\mathbb S^{n} \times \br) \) such that \( g'(A_k) = A_{k+1} \).
Note, by construction, \( \bigcup_{k\in\bz} A_k \) is all of \( \mathbb S^n \times \br \), and so \( g' \) is topologically loxodromic. 
As both \( (g'\circ f)(\Sigma_k) \) and \( \Sigma_{k+1} \) are locally flat annuli contained in the interior of \( A_{k+1} \), we can choose \( h_k \in \Homeo_0(\mathbb S^{n} \times \br) \) that is supported in the interior of \( A_{k+1} \) and satisfies \( h_k(g'(f(\Sigma_k))) = \Sigma_{k+1} \). 
The sequence \( \{h_k\}_{k\in \bz} \) consists of homeomorphisms with pairwise-disjoint supports, and hence, we can define \( h = \prod_{k\in\bz} h_k \).
Set \( g = h\circ g' \). 

As the support of each \( h_k \) is contained in the interior of \( A_{k+1} \), we have that \[ g(A_k) = h_k(g'(A_k)) = h_k(A_{k+1}) = A_{k+1}; \] in particular, \( g \) is topologically loxodromic.
Let \( B_k \) be the locally flat annulus co-bounded by \( \Sigma_k \) and \( \Sigma_{k+1} \).
Then, as \( (g\circ f)(\Sigma_k) = \Sigma_{k+1} \), we have \( (g\circ f)(B_k) = B_{k+1} \).
Therefore, \( g\circ f \) is topologically loxodromic.
Moreover, \( g \) and \( g\circ f \) share the same sink, and hence they are  conjugate  by \Cref{cor:conjugate}.
This establishes that \( f \) can be expressed as a commutator.  
\end{proof}

\begin{Thm}
\label{thm:rn}
For \( n \in \bn \), every element of \( \Homeo_0(\mathbb R^n) \) is a commutator. 
\end{Thm}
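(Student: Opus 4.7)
The plan is to mimic the proof of \Cref{thm:annulus} inside the one-point compactification $\mathbb{S}^n = \mathbb{R}^n \cup \{\infty\}$, with $\infty$ playing the role of one end of $\mathbb{S}^{n-1} \times \mathbb{R}$ and a finite point $p \in \mathbb{R}^n$ playing the role of the other. Every $f \in \Homeo_0(\mathbb{R}^n)$ extends to a homeomorphism of $\mathbb{S}^n$ fixing $\infty$, and the goal is to construct $g \in \Homeo_0(\mathbb{R}^n)$ such that $g$ and $g \circ f$ are both topologically loxodromic on $\mathbb{S}^n$ sharing $\infty$ as sink. Then \Cref{cor:conjugate} produces a conjugacy between $g$ and $g \circ f$ inside $\Homeo_0(\mathbb{R}^n)$, which exhibits $f$ as a commutator.

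I would fix a point $p \in \mathbb{R}^n$ to serve as the finite source and, following the inductive construction of \Cref{thm:annulus}, build two sequences indexed by $k \in \mathbb{Z}$: locally flat $(n-1)$-spheres $\Sigma_k \subset \mathbb{R}^n$ and locally flat annuli $A_k$ in which consecutive terms share a boundary sphere, subject to $\Sigma_k \cup f(\Sigma_k) \subset \mathrm{int}(A_k)$, $\bigcup_k A_k = \mathbb{R}^n \setminus \{p\}$ with pairwise-disjoint interiors, $A_k$ escaping every compact subset of $\mathbb{R}^n$ as $k \to +\infty$, and $A_k$ converging to $\{p\}$ in the Hausdorff sense as $k \to -\infty$. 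The construction begins with an initial sphere $\Sigma_0$ near $p$ and an annulus $A_0$ containing $\Sigma_0 \cup f(\Sigma_0)$ in its interior, then proceeds outward toward $\infty$ and inward toward $p$ using continuity of $f$ together with the generalized Sch\"onflies and annulus theorems to produce each new pair $(\Sigma_{k+1}, A_{k+1})$.

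With the sequences in hand, \Cref{cor:coordinates} yields $g' \in \Homeo_0(\mathbb{R}^n)$ with $g'(A_k) = A_{k+1}$; this $g'$ is topologically loxodromic with source $p$ and sink $\infty$. A second application of \Cref{cor:coordinates}, performed inside $A_{k+1}$, provides for each $k$ a homeomorphism $h_k \in \Homeo_0(\mathbb{R}^n)$ supported in $\mathrm{int}(A_{k+1})$ with $h_k((g' \circ f)(\Sigma_k)) = \Sigma_{k+1}$. Setting $h = \prod_k h_k$ (well-defined because the supports are pairwise disjoint) and $g = h \circ g'$, one checks $g(A_k) = A_{k+1}$, so $g$ remains topologically loxodromic with source $p$ and sink $\infty$, and $(g \circ f)(\Sigma_k) = \Sigma_{k+1}$. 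Letting $B_k$ be the annulus co-bounded by $\Sigma_k$ and $\Sigma_{k+1}$, one then has $(g \circ f)(B_k) = B_{k+1}$, showing $g \circ f$ is also topologically loxodromic with sink $\infty$. The shared-sink hypothesis of \Cref{cor:conjugate} is satisfied, and the argument closes.

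The main obstacle lies in the inductive step toward $k \to -\infty$: arranging the $A_k$ to shrink to the single point $p$ while still containing $f(\Sigma_k)$ in their interiors forces all of the loxodromic dynamics to be concentrated at the one finite fixed point $p$, in contrast to the annulus case where the analogous point is an end. This is exactly the single-point subtlety flagged in the remark following \Cref{thm:alexander} as the obstruction to extending the argument to the smooth category.
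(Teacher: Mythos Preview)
There is a genuine gap in the inward induction toward \(k\to -\infty\), and it is not merely a subtlety. Your construction requires \(\Sigma_k \cup f(\Sigma_k)\subset \mathrm{int}(A_k)\) for every \(k\), but as \(k\to -\infty\) the annuli \(A_k\) shrink to the chosen point \(p\), so \(\Sigma_k\to p\) and hence \(f(\Sigma_k)\to f(p)\). Unless \(f(p)=p\), the sphere \(f(\Sigma_k)\) eventually lies at positive distance from \(p\) and cannot sit inside the shrinking \(A_k\). Since an arbitrary \(f\in\Homeo_0(\br^n)\) need not have any fixed point in \(\br^n\) (translations, for instance), there is in general no admissible choice of \(p\), and the inductive step simply fails. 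Equivalently, your scheme forces both \(g\) and \(g\circ f\) to have \(p\) as their source, which would entail \(f(p)=p\).

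The paper's proof avoids this by \emph{not} fixing the finite source in advance. Near the end \(z=\infty\) it runs your annulus-style induction, but near the would-be finite source it starts instead from a point \(y\) with \(f(y)\neq y\), uses the disjointness of \(D_0\) and \(f(D_0)\) to create room, and then \emph{builds} the source \(x\) of \(g\circ f\) dynamically: one arranges \(g_0(f(x))=x\) and then edits \(g_0\) by a sequence \(\sigma_m\) supported in balls of radius \(1/m\) about \(f(x)\), taking \(g=\lim_m g_0\circ\sigma_1\circ\cdots\circ\sigma_m\). In the end \(g\) and \(g\circ f\) have \emph{different} finite sources and only share the sink \(z\), which is all that \Cref{cor:conjugate} requires. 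Finally, the ``single-point'' remark in the paper concerns \emph{differentiability} of the limit map at one point when passing to \(\Diff_0(\br^n)\); it is not an acknowledgment that the topological construction you outline can be pushed through.
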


\begin{proof}
Letting \( \mathbb S^0 \) be a singleton, the proof  in the annulus case with \( n = 0 \) shows that every element of \( \Homeo_0(\br) \) can be expressed as a commutator. 
We may now assume that \( n > 1 \).

Fix \( z \in \mathbb S^n \), and identify \( \Homeo_0(\br^n) \) with the stabilizer of \( z \) in \( \Homeo_0( \mathbb S^n) \).  
Let \( f \) be an element of \( \Homeo_0(\mathbb S^n) \) such that \( f(z) = z \). 
We will show that \( f \) can be expressed as a commutator of a pair of elements in \( \Homeo_0(\mathbb S^n) \) that each fix \( z \). 
The identity homeomorphism is clearly a commutator, so we may assume that \( f \) is not the identity, and therefore there exists \( y \in \mathbb S^n \) such that \( f(y) \neq y \). 
By continuity, there exists a locally flat ball \( D_0 \) centered at \( y \) such that \( f(D_0) \cap D_0 = \varnothing \). 
Choose a locally flat ball \( D_1 \) centered at \( z \) that is disjoint from \( D_0 \cup f(D_0) \). 
Again by continuity, by shrinking \( D_1 \) if necessary, we may assume that \( f(D_1) \) is also disjoint from \( D_0 \cup f(D_0) \). 

First, in \( D_1 \), we proceed identically as we did in the annulus case. 
Choose a locally flat  annulus \( A_1 \) such that \( \partial D_1 \cup f(\partial D_1) \) is contained in the interior of \( A_1 \) and such that \( A_1 \) is disjoint from \( D_0 \cup f(D_0) \). 
Now choose a locally flat ball \( D_{2} \) centered at \( z \) such that \( D_2 \cup f(D_2) \) is disjoint from \( A_1 \). 
Choose a locally flat  annulus \( A_2 \) such that \( A_1 \) and \( A_2 \) share a boundary component and \( \partial D_2 \cup f(\partial D_2) \) is contained in the interior of \( A_2 \). 
Continuing in this fashion, we build a sequence of locally flat  annuli \( \{ A_k\}_{k\in\bn} \) and locally flat balls \( \{D_k\}_{k\in\bn} \) such that \( A_k \) and \( A_{k+1} \) share a boundary, \( \partial D_k \cup f(\partial D_k) \) is contained in the interior of \( A_k \), and \( \bigcap_{k\in\bn} D_k = \{ z \} \). 
Note that the last condition is not a priori guaranteed, but at each stage we are free to choose \( D_k \) to have radius less than \( 1/k \), forcing the intersection of the \( D_k \) to be \( \{z\} \). 

Let \( g_0' \) be a topologically loxodromic homeomorphism of \( \mathbb S^n \) that fixes \( z \), that maps \( A_{k+1} \) to \( A_{k} \) for \( k \in \bn \), and that maps \( f(\partial D_1) \) to \( \partial D_0 \). 
For \( k \leq 0 \), let \( A_k = (g_0')^{\circ (n+1)}(A_1) \). 
Then, \( (g_0'\circ f)(\partial D_{k+1}) \) and \( \partial D_{k} \) are locally flat spheres in the annulus \( A_k \), and so we may choose  \( h_k \in \Homeo( \mathbb S^n ) \) supported in the interior of \( A_{k} \) such that \( h_k(g_0'(f(\partial D_{k+1}))) = \partial D_{k} \). 
As the \( h_k \) have pairwise disjoint support, we can define \( h = \prod_{k\in\bn} h_k \). 
Let \( g_0 = h \circ g_0' \). 
Then, for \( k \in \bn\cup\{0\} \), setting \( T_{k} \) to be the locally flat  annulus bounded by \( \partial D_{k} \) and \( \partial D_{k+1} \), we have \( (g_0\circ f)(T_{k+1}) = T_{k} \). 

At this point, \( g_0 \circ f \) behaves like a topologically loxodromic homeomorphism when restricted to \( D_1 \), and so now we have to edit \( g_0 \) in the complement of \( D_1 \). 
This portion of the argument is a version of Tsuboi's argument for spheres. 
Let \( B_0 = g_0(f(D_0)) \), and note that \( f(B_0) \subset f(D_0) \) and \( g_0(f(B_0)) \subset B_0 \).
Moreover, observe that \( B_0 \cup f(D_0) \subset  A_0 \); hence, modifying \( g_0 \) in either \( f(D_0) \) or \( B_0 \) will not change the fact that \( g_0 \) is topologically loxodromic. 

Fix \( x \in B_0 \).
We may assume that \( g_0(f(x)) = x \); indeed, if not, then we may post compose \( g_0 \) with a homeomorphism of \( \mathbb S^n \) supported in \( B_0 \) that maps \( g_0(f(x)) \) to \( x \). 
And, as just noted above, this edited version of \( g_0 \) remains topologically loxodromic. 
We will now recursively edit \( g_0 \)  in \( f(D_0) \) so that \( g_0 \circ f \) will be topologically loxodromic with \( z \) its sink and \( x \) its source.

Below, given a subset \( X \), we let \( X^\mathrm o \) denote its interior. 
Let \( \Sigma_0 = \partial B_0 \), let \( B_0' = f(B_0) \), let \( B_1 = g_0(B_0') \), and let \( \Sigma_1 = \partial B_1  \).
Note \( \Sigma_0 \cap \Sigma_1 = \varnothing \), and therefore \( \Sigma_0 \) and \( \Sigma_1 \) co-bound a locally flat  annulus, which we label \( T_{-1} \). 
Then, by continuity, we can choose a locally flat ball \( B_1' \) in the interior of \( B_0' \) centered at \( f(x) \) such that  \( g_0(B_1') \subset B_1^\mathrm o \).  
We can then choose \( \sigma_1 \in \Homeo(\mathbb S^n) \) such that \( \sigma_1 \) is supported in \( f(B_0) \) and such that \( \sigma_1(f(\Sigma_1)) = \partial B_1' \). 
Set \( B_2 = g_0(B_1') \), and  set \( \Sigma_2 = \partial B_2 \).
Now, set \( T_{-2} \) to be the  locally flat annulus co-bounded by \( \Sigma_1 \) and \( \Sigma_2 \), and set  \( g_1 = g_0 \circ \sigma_1 \).
Then, \( (g_1\circ f)(T_k) = T_{k-1} \) for all \( k \in \bn \cup \{0, -1\} \). 
Note that \( B_1' \) can be chosen arbitrarily small.

Continuing in this fashion, for each \( m \in \bn \), we obtain a  locally flat annulus \( T_{-m} \) and a homeomorphism \( \sigma_m \) of \( \mathbb S^n \) supported in a ball of radius \( 1/m \) centered at \( f(x) \) such that
\begin{enumerate}[(1)]
\item \( \mathbb S^n \ssm \{ x,z\} = \bigcup_{k\in\bz} T_k \),
\item \( (g_m\circ f)(T_k) = T_{k-1} \) for every integer \( k \geq -m \), and
\item \( g_m = g_{m-1}\circ \sigma_m = g_0 \circ (\sigma_1 \circ \sigma_{2} \circ \cdots \circ \sigma_m) \).
\end{enumerate}
These properties guarantee that \( \lim_{m\to\infty} g_m \) exists, call it \( g \), and that \( (g \circ f)(T_k)= T_{k-1} \) for every \( k \in \bz \); hence, \( g\circ f \) is topologically loxodromic. 
Moreover, \( g \) is topologically loxodromic as \( g \) agrees with \( g_0 \) outside of \( B_0 \cup f(D_0) \); in particular, \( g(A_k) = g_0(A_k) = A_{k+1} \) for all \( k \in \bz \).  
And, as \( g \) and \( g\circ f \) share the same sink, they are conjugate in \( \Homeo_0(\br^n) \) by \Cref{cor:conjugate}.
Thus, \( f \) can be expressed as a commutator in \( \Homeo_0(\br^n) \).
\end{proof}

We finish by providing a proof of \Cref{cor:2}.

\begin{proof}[Proof of \Cref{cor:2}]
Without loss of generality, it is enough to prove the statement with \( p_i \in \bn \) for each \( i \in \{1, \ldots, r\} \), as we can always replace an element with its inverse in the decomposition to switch the sign of the exponent. 

Fix \( g \in G \).
We have shown in the above proofs that there exist \( f, h \in G \) such that \( g = f\circ h \) and both \( f \) and \( h \) are topologically loxodromic. 
Applying the same techniques to \( f \), we can write \( f = f_1 \circ f_2 \) with both \( f_1 \) and \( f_2 \) topologically loxodromic. 
Therefore, continuing this splitting as many times as necessary, we can write \( g = \prod_{i=1}^r f_i \) with \( f_i \) topologically loxodromic. 
Now, every power of a topologically loxodromic homeomorphism is itself topologically loxodromic. 
Therefore, by \Cref{prop:conjugate} and \Cref{cor:conjugate}, \( f_i^{\circ p_i} \) is conjugate to \( f_i \). 
Choose \( h_i \) such that \( f_i = h_i \circ f_i^{\circ p_i} \circ h_i^{-1} \). 
Then, setting \( g_i  = h_i \circ f_i \circ h_i^{-1} \), we have \( f_i = g_i^{\circ p_i} \), and hence, \( g = \prod_{i=1}^r g_i^{\circ p_i} \). 
\end{proof}

\bibliographystyle{amsalpha}
\bibliography{commutators}

\end{document}